\theoremstyle{plain}
\newtheorem{corollary}{Corollary}
\newtheorem{lem}{Lemma}
\newtheorem{prop}{Proposition}
\newtheorem{remark}{Remark}
\newtheorem{thm}{Theorem}
\numberwithin{equation}{section}
\title{Laplace-type integral representations of the generalized Bessel function and of the Dunkl kernel of type $B_2$}
\keywords{Dunkl kernel; Generalized Bessel function; Laplace-type integral representation; Duistermaat-Heckman measure.} 
\author[B. Amri]{B\'echir Amri}
\address{Universit\'e Tunis El Manar, Facult\'e des sciences de Tunis \\ Laboratoire d'Analyse Math\'ematique et Applications, LR11ES11 
\\ 2092 El Manar I, Tunisie}
\email{bechir.amri@gmail.com}
\author[N. Demni]{Nizar Demni}
\address{IRMAR, Universit\'e de Rennes 1\\ Campus de
Beaulieu\\ 35042 Rennes cedex\\ France}
\email{nizar.demni@univ-rennes1.fr}
\begin{document}
\maketitle
\begin{abstract}
ln this paper, we derive a Laplace-type integral representations for both the generalized Bessel function and the Dunkl kernel associated with the rank-two root system of type $B_2$. 
The derivation of the first one elaborates on the integral representation of the generalized Bessel function proved in \cite{Demni} through the modified Bessel function of the first kind. In particular, we recover an expression of the density of the Duistermaat-Heckman measure for the dihedral group of order eight. As to the integral representation of the corresponding Dunkl kernel, it follows from an application of the shift principle to the generalized Bessel function.  
\end{abstract}

\section{Introduction}
Dunkl operators were introduced by C. F. Dunkl in the late eightees and form a commutative algebra of differential-difference operators (\cite{Dunkl}). They give rise to the so-called Dunkl kernel which extends the exponential kernel to finite reflection groups and multiplicity functions, and served together with their trigonometric analogues to prove the integrability of Calogero-Moser-Sutherland systems (see e.g. \cite{CDGRVY}, Ch.I). They also allow for a rich harmonic analysis which reduces for special multiplicity values to the one on Cartan motion groups associated with semi-simple Lie groups (\cite{CDGRVY}, Ch.I). The key tool of this analysis is the so-called generalized Bessel function (hereafter GBF) which generalizes the spherical functions on Euclidean-type symmetric spaces. At the probabilistic side, the Dunkl-Laplace operator is the infinitesimal generator of a Feller process referred to as the Dunkl process (\cite{CDGRVY}, CH. II and III). The latter as well as its projection on the positive Weyl chamber were extensively studied and exhibits very interesting properties extending those satisfied by the Brownian motion on Euclidean spaces or reflected in Weyl chambers, and by Bessel processes (see also \cite{Dem0}, \cite{Dem-Lep}). In particular, the definition of the Duistermaat-Heckman measure associated with a co-adjoint orbit of a compact semi simple Lie group makes now sense for any finite Coxeter group (\cite{BBO}). Moreover, this measure encodes the Laplace-type integral representation of the GBF when all the multiplicity values equal one. For arbitrary multiplicity values with nonnegative real part, a Laplace-type integral representation still holds and the representative measure is supported in a convex polytope (\cite{dJ}, Corollary 3.3). For $A$-type root systems, its explicit description is obtained in \cite{Amri0} using a contraction principle and induction. There, the polytope supporting the representative measure is the image of the top-row of the $A$-type Gelfand-Tsetlin graph by an affine transformation. Note that this graph as well as its analogues of types $B,C,D$ arise in the study of radial parts of minors of Hermitian random matrices over division algebras (see \cite{Def}). However, beware that only the eigenvalues of complex Hermitian ones considered in \cite{Def} fit into the theory of Dunkl operators. On the other hand, the Dunkl kernel is also positive-definite (\cite{Ros}) and few attempts to derive its Laplace-type integral representation have been done in \cite{Dunkl1}, \cite{Dunkl2}, \cite{Amri1}. Both papers \cite{Dunkl1} and \cite{Amri1} deal with the $A_2$-type Dunkl kernel where its integral representation is deduced from that of the corresponding GBF using the shift principle. Nonetheless, the latter improves the former since it is valid for a larger set of multiplicity values and has less integration variables. As to \cite{Dunkl2}, it deals with the $B_2$-type root system and equal multiplicity values. There, the strategy of derivation of the integral representation of the Dunkl kernel is very similar to the one followed in \cite{Dunkl1} and the obtained formula involves five integration variables. 

In this paper, we derive a Laplace-type integral representation of the GBF of type $B_2$ over $\mathbb{R}^2$ valid for arbitrary multiplicity values whose sum is greater than $1/2$. Our derivation starts from the integral representation of the GBF obtained in \cite{Demni} and is far from being a direct consequence of it. Actually, it relies on a tricky way of writing a six-variables polynomial as the squared Euclidean norm of a planar vector together with an integral representation of the modified Bessel function we prove below. Moreover, the representative measure is shown to be supported in the convex hull of either of both variables as it is supposed to be by the virtue of Corollary 3.3 in \cite{dJ}. When both multiplicity values are equal to one, we get an explicit expression of the Duistermaat-Heckman measure associated with the dihedral group of order eight. Afterwards, we use the shift principle to derive a Laplace-type integral representation for the Dunkl kernel of type $B_2$, which is technical and a bit lengthy. Nonetheless, we believe  that our resulting formula is optimal in the sense that it as most compact as possible.   
 
The paper is organized as follows. In the next section, we recall some facts on Dunkl operators and the integral representation of the GBF of type $B_2$ proved in \cite{Demni}. In the third section, we derive its Laplace-type integral representation and deduce the  
Duistermaat-Heckman measure associated with the dihedral group of order eight. The fourth section is devoted to the application of the shift principle and to the technical derivation of the Laplace-type integral representation of the Dunkl kernel of type $B_2$. 

\section{Root systems, the Dunkl kernel and the generalized Bessel function} 
For facts on root systems and on Dunkl operators, we refer the reader to the monograph \cite{Dun-Xu}. Let $(V,\langle \cdot, \cdot \rangle)$ be a Euclidean space of dimension $N$ and denote by $\mathopen\|\cdot\mathclose\|:=\langle \cdot, \cdot \rangle^{1/2}$ the corresponding Euclidean norm. A root system $R$ in $V$ is a finite set of vectors (roots) in $V \setminus \{0\}$ such that 
\begin{equation*}
\forall \alpha \in R, \quad \sigma_{\alpha}(R) = R, 
\end{equation*}
where $\sigma_{\alpha}$ is the reflection with respect to $\alpha^{\perp}$: 
\begin{equation*}
\forall x \in V, \quad \sigma_{\alpha}(x) = x - 2\frac{\langle \alpha, x\rangle}{\langle \alpha, \alpha\rangle}\alpha. 
\end{equation*} 
It is reduced if 
\begin{equation*}
\forall \alpha \in R, \quad R \cap \mathbb{R} \alpha = \{\pm \alpha\}. 
\end{equation*}
The set $\{\sigma_{\alpha}, \alpha \in R\}$ generates a finite group $W$, called the reflection group associated with $R$, which acts on $R$ in a natural way. With this action in hands, a $W$-invariant function  $k:R  \to  \mathbb{C}$ is called a multiplicity function. Therefore, $k$ takes as many values as the number of orbits of $R$ under the action of $W$. Now, we can split $R$ into a positive and a negative parts $R_+$ and $R_-$ by choosing a non zero vector $v \in V \setminus R$ and to every choice of positive system $R_+$, there exists a unique set $S$ of indecomposable roots referred to as the simple system. The elements of $S$ are the so-called simple roots and characterize $R_+$ as the set of roots that are linear combinations of simple roots with non negative coefficients. 

Now, given a data $(R,R_+,k),$ the Dunkl operator or derivative $T_{\xi}$ in the direction $\xi \in V \setminus \{0_V\}$ is defined by: 
\begin{equation*}
T_{\xi}(f)(x) = \partial_{\xi}(f)(x) + \sum_{\alpha \in R_+} k(\alpha) \langle \alpha, \xi \rangle \frac{f(x) - f(\sigma_{\alpha}x)}{\langle \alpha,x\rangle}. 
\end{equation*}
The Dunkl operators form a commutative algebra and if $(e_1,\dots, e_N)$ is the canonical basis of $V$, then we shall simply write $T_i$ for $T_{e_i}, 1 \leq i \leq N$. Moreover, for generic values of $k$ (for instance those with non negative real parts), there exists a kernel 
\begin{equation*}
(x,y) \in V \times V \mapsto D_k(x,y)  
\end{equation*}
named the Dunkl kernel, which is the unique analytic solution of the spectral problem: 
\begin{eqnarray*}
T_{\xi}D_k(\cdot, y)(x) &=& \langle \xi, y\rangle D_k(x,y), \quad \textrm{for all} \quad \xi \in V \in \setminus \{0_V\}, \\ 
D_k(0_V,y) & = & 1, \quad \textrm{for all} \quad y \in V. 
\end{eqnarray*}
This kernel is even positive definite: 
\begin{equation*}
D_k(x,y) = \int e^{\langle x, z\rangle} \mu_{y}^{(k)}(dz) 
\end{equation*}
for some probability measure $\mu_{y}^{(k)}$ supported in the convex hull $\textrm{co}(y)$ of the orbit $Wy$. More generally, this measure defines the so-called Dunkl intertwining operator which intertwines the algebras of Dunkl and of partial derivatives. Now, the generalized Bessel function is defined as the projection of $D_k$ onto the space of $W$-invariant functions: 
\begin{equation}\label{Def1}
D_k^W(x,y) := \frac{1}{|W|} \sum_{w \in W} D_k(x,wy) =  \frac{1}{|W|} \sum_{w \in W} D_k(wx,y)
\end{equation}
The root system we shall be dealing with in this paper is of type $B_2$ and is defined by: 
\begin{equation*}
R = \{\pm e_1, \pm e_2, \pm e_1 \pm e_2\} \subset V = \mathbb{R}^2, 
\end{equation*}
The corresponding reflection group is isomorphic to the dihedral group of order eight:
\begin{equation*}
W =  \left(\mathbb{Z}_2\right)^2 \rtimes S_2 
\end{equation*}
and the action of $W$ on $R$ splits the latter into two orbits
\begin{equation*}
\{\pm (e_1 \pm e_2)\} \quad \{\pm e_1, \pm e_2\}
\end{equation*}
to which we attach two multiplicity values $k_1, k_2, \in \mathbb{R}_+^2$ respectively. In this case, it was proved in \cite{Demni} that: 
\begin{equation}\label{1}
D_k^W(x,y)= c_k \int_{-1}^1\int_{-1}^1\mathcal{I}_{\gamma-1/2}\left(\sqrt{\frac{Z_{x,y}(u,v)}{2}}\right)(1-u^2)^{k_1-1}(1-v^2)^{k_2-1}dudv
\end{equation}
where $\gamma := k_1+k_2$, $x = (x_1,x_2), y = (y_1,y_2)$, 
\begin{equation*}
c_k=\frac{\Gamma(k_1+1/2)\Gamma(k_2+1/2)}{\pi\Gamma(k_1)\Gamma(k_2)},
\end{equation*}
is the normalizing constant, 
\begin{equation*}
\mathcal{I}_{ \nu}(t)=\Gamma(\nu+1)\sum_{n=0}^{+\infty}\frac{(t/2)^{2n}}{n!\Gamma(n+\nu+1)}, \quad t \in \mathbb{R}.
\end{equation*}
is the normalized modified Bessel function of the first kind and of order $\nu$, and 
\begin{equation}\label{Argument}
Z_{x,y}(u,v) := (x_1^2+x_2^2)(y_1^2+y_2^2)+u(x_1^2-x_2^2)(y_1^2-y_2^2)+4vx_1x_2y_1y_2.
\end{equation}
Actually, \eqref{1} is a particular instance of a more general formula valid for integer values of $\gamma$, nevertheless it can extended to complex values with non negative real part using Carleson criteria as explained at the end of \cite{Dem1}. One can also see that \eqref{1} holds for non negative values of $\gamma$ by applying Lemma 8.5.2 in \cite{Kob-Man} to the Fourier-Gegenbauer series (3) in \cite{Demni} (with the substitutions $\nu \rightarrow \gamma, p=2$). 
 
\section{Laplace-type integral representation of the GBF of type $B_2$}
With the help of the notations introduced in the previous section, our main result may be stated as follows:
 \begin{thm}\label{th1}
If $\gamma > 1/2$, then the GBF of type $B_2$ admits the following Laplace-type integral representation:
\begin{equation}\label{B2}
D_k^W(x,y)= \int_{ \mathbb{R}^2}e^{\langle x,z\rangle}H_k(y,z)\;dz
\end{equation}
where 
\begin{multline*}
H_k(y,z) := \frac{(2\gamma-1)c_k}{\pi}
\int_{E_{y,z}} \Big( (y_1^2-y_2^2)^2(1-u^2)+4y_1^2y_2^2(1-v^2)\Big)^{-\gamma+1}
\\ \Big\{(y_1^2+y_2^2-(z_1^2+z_2^2))^2-(z_1^2-z_2^2-u(y_1^2-y_2^2))^2-4(z_1z_2-v y_1y_2)^2\Big\}^{\gamma-3/2} \qquad (1-u^2)^{k_1-1}(1-v^2)^{k_2-1}du\;dv
 \end{multline*}
and
\begin{eqnarray*}
E_{y,z} := \Big\{(u,v)\in[-1,1]^2;\;(z_1^2-z_2^2-u(y_1^2-y_2^2))^2+4(z_1z_2-v y_1y_2)^2\leq (y_1^2+y_2^2-(z_1^2+z_2^2))^2\Big\}.
\end{eqnarray*}
\end{thm}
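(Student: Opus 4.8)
The plan is to convert the Bessel-type representation \eqref{1} into a Laplace-type one by writing the normalized modified Bessel function as a Laplace transform over a planar disc and then transporting this representation through the quadratic dependence of the argument $Z_{x,y}(u,v)$ on $x$. The starting tool is the auxiliary representation alluded to in the introduction: for $\xi\in\mathbb{R}^2$ and $\gamma>1/2$,
\begin{equation*}
\mathcal{I}_{\gamma-1/2}(\|\xi\|)=\frac{\gamma-1/2}{\pi}\int_{\|w\|\le 1}e^{\langle\xi,w\rangle}\,(1-\|w\|^2)^{\gamma-3/2}\,dw.
\end{equation*}
I would prove this by expanding $e^{\langle\xi,w\rangle}$, integrating term by term in polar coordinates over the unit disc (the odd powers vanishing by symmetry, the even ones producing Beta integrals), and matching the resulting power series with the one defining $\mathcal{I}_{\gamma-1/2}$; the hypothesis $\gamma>1/2$ is exactly what makes $(1-\|w\|^2)^{\gamma-3/2}$ integrable on the disc.

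Next I would recognize $Z_{x,y}(u,v)$ as a quadratic form in $x$. Writing $S=y_1^2+y_2^2$, $P=y_1^2-y_2^2$, $Q=2y_1y_2$, one checks that $Z_{x,y}(u,v)=x^{\mathsf T}Mx$ with
\begin{equation*}
M=M_{u,v}=\begin{pmatrix} S+uP & vQ \\ vQ & S-uP\end{pmatrix},\qquad \det M=P^2(1-u^2)+Q^2(1-v^2)\ge 0,
\end{equation*}
using $S^2=P^2+Q^2$, so that $M$ is positive semidefinite on $[-1,1]^2$ and $\det M$ is precisely the base of the first factor of $H_k$. This is the step where the six-variable polynomial is written as a squared Euclidean norm of a planar vector: choosing any $A$ with $A^{\mathsf T}A=M/2$ (for instance the symmetric positive square root, defined wherever $M\succ 0$) gives $Z_{x,y}(u,v)/2=\|Ax\|^2$, and the lemma applied with $\xi=Ax$ expresses $\mathcal{I}_{\gamma-1/2}(\sqrt{Z_{x,y}(u,v)/2})$ as a disc integral whose exponent is $\langle Ax,w\rangle=\langle x,A^{\mathsf T}w\rangle$.

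The substitution $z=A^{\mathsf T}w$ then rewrites this in the target variable: the unit disc maps onto the ellipse $\{2\,z^{\mathsf T}M^{-1}z\le1\}$, the weight becomes $(1-2\,z^{\mathsf T}M^{-1}z)^{\gamma-3/2}$, and the Jacobian contributes $|\det A|^{-1}=2(\det M)^{-1/2}$. Inserting this into \eqref{1} and interchanging the $(u,v)$- and $z$-integrations yields \eqref{B2}. Since every factor of the integrand is nonnegative and the total equals the finite quantity $D_k^W(x,y)$, the interchange is legitimate by Tonelli's theorem. What remains is purely algebraic: absorbing $(\det M)^{-1/2}$ into the $(\det M)$-denominator hidden in $(1-2z^{\mathsf T}M^{-1}z)^{\gamma-3/2}$ produces the overall power $(\det M)^{-\gamma+1}$, while the identity
\begin{equation*}
\det M-2\big[(S-uP)z_1^2+(S+uP)z_2^2-2vQz_1z_2\big]=(S-\zeta_S)^2-(\zeta_P-uP)^2-(\zeta_Q-vQ)^2,
\end{equation*}
with $\zeta_S=z_1^2+z_2^2$, $\zeta_P=z_1^2-z_2^2$, $\zeta_Q=2z_1z_2$ and $\zeta_S^2=\zeta_P^2+\zeta_Q^2$, turns the numerator into the second curly bracket of $H_k$ and the constraint $2z^{\mathsf T}M^{-1}z\le1$ into the region $E_{y,z}$.

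The main obstacle is analytic rather than algebraic, and concerns the fibrewise change of variables. The matrix $M_{u,v}$ degenerates on the boundary of $[-1,1]^2$, and also when $y_1=\pm y_2$, where the ellipse collapses to a segment and $A$ fails to be invertible; I would perform the substitution on the full-measure set $\{\det M>0\}$ and argue by continuity that the contribution of the degeneracy locus is negligible. One must also confirm that the reassembled $H_k(y,\cdot)$ is locally integrable, which follows since its integral against $e^{\langle x,\cdot\rangle}$ is finite and equal to $D_k^W(x,y)$, and check for consistency that $E_{y,z}$ is empty unless $z$ lies in the convex hull predicted by \cite{dJ}.
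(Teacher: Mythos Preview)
Your proposal is correct and follows essentially the same route as the paper: both invoke the disc representation $\mathcal I_{\gamma-1/2}(\|\xi\|)=\frac{\gamma-1/2}{\pi}\int_{\|w\|\le1}e^{\langle\xi,w\rangle}(1-\|w\|^2)^{\gamma-3/2}\,dw$, recognize $Z_{x,y}(u,v)/2=\|Ax\|^2$ for a matrix $A$ with $A^{\mathsf T}A=M/2$, and then perform the affine change $z=A^{\mathsf T}w$ before interchanging integrals. The only cosmetic difference is that the paper writes down an explicit lower-triangular factor $A=a\bigl(\begin{smallmatrix}1&c\\0&b\end{smallmatrix}\bigr)$ (its variables $a,b,c$), whereas you take the symmetric square root; either choice yields the same Jacobian $|\det A|^{-1}=2(\det M)^{-1/2}$ and the same final expression, and your algebraic identity for $\det M\,(1-2z^{\mathsf T}M^{-1}z)$ is exactly what the paper's formula for $a^{-2\gamma+1}b^{-2\gamma+2}(a^2b^2-b^2z_1^2-(z_2-cz_1)^2)^{\gamma-3/2}$ unpacks to.
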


\begin{proof}
Without loss of generality, we may assume $y \neq (0,0)$ since $D_k^W(x,(0,0)) = 1$ for all $x \in \mathbb{R}^2$.  Now, set\footnote{We ommit the dependence of $a,b,c$ on $(y,u,v)$ for sake of simplicity.}
\begin{eqnarray*}
a &:=& \sqrt{\frac{y_1^2+y_2^2+u(y_1^2-y_2^2)}{2}}, 
\\ b &:= & \frac{ \sqrt{ (y_1^2-y_2^2)^2(1-u^2)+4y_1^2y_2^2(1-v^2)}}{  y_1^2+y_2^2+u(y_1^2-y_2^2)} \\
c & := & \frac{2v  y_1y_2}{y_1^2+y_2^2+u(y_1^2-y_2^2)}.
\end{eqnarray*}
Then, we can check that 
\begin{equation*}
\sqrt{\frac{Z_{x,y}(u,v)}{2}} = a\sqrt{(x_1 +cx_2)^2+b^2x_2^2},
\end{equation*}
which shows that for fixed $y$, the argument of the modified Bessel function displayed in \eqref{1} may be written as the Euclidean norm of a planar vector. Furthermore, we shall need the following lemma:
\begin{lem}\label{l1} 
For any $\nu>0$ and any $z=(z_1,z_2) \in \mathbb{R}^2$, we have
\begin{eqnarray*}\label{bessel} \mathcal{I}_{\nu }(\|z\|)=\frac{  \nu }{\pi } \int_{ \{\|y\|\leq 1\} } e^{  \langle z,y\rangle }(1-\|y\|^2)^{\nu-1}dy,
\end{eqnarray*}
where $\|z\|=\sqrt{z_1^2+z_2^2}$.
\end{lem}
\begin{proof}
Using the invariance of the Lebesgue measure under rotations and a polar coordinates variable change, we write:
\begin{align*}
\int_{\{\|y\|\leq 1\} } e^{\langle z,y\rangle }(1-\|y\|^2)^{\nu-1}dy & = \int_{0}^{1}\int_{0}^{2\pi} e^{\|z\|s\cos \theta}  s (1-s^2)^{\nu-1}d\theta ds
\end{align*}
Integrating with respect to $\theta$ and appealing to the integral representation (\cite{Wat})
\begin{equation*}
\mathcal{I}_{\nu}(t) = \frac{\Gamma(\nu+1)}{2\sqrt{\pi} \Gamma(\nu+1/2)}\int_{0}^{2\pi} e^{t\cos \theta}\sin^{2\nu}\theta \;d\theta, 
\end{equation*}
we get: 
\begin{equation*}
\int_{\{\|y\|\leq 1\} } e^{\langle z,y\rangle }(1-\|y\|^2)^{\nu-1}dy=  2\pi\int_{0}^{1}\mathcal{I}_0(\|z\|s) s (1-s^2)^{\nu-1}ds.
\end{equation*}
Expanding the modified Bessel function $\mathcal{I}_0$ and integrating termwise finishes the proof.  
\end{proof}

Using lemma \ref{l1} followed by an affine change of variables, we readily derive
\begin{align} \nonumber
\mathcal{I}_{\gamma-1/2}\left(\sqrt{\frac{Z_{x,y}(u,v)}{2}}\right) & = \frac{ \gamma-1/2}{\pi } \int_{ \{\|z\|\leq 1\}} e^{ ax_1z_1+ax_2( cz_1+b z_2 )}(1-\|z\|^2)^{\gamma-3/2}dz
 \\& = \frac{ \gamma-1/2}{\pi } \int_{ \{b^2z_1^2+(z_2-cz_1)^2\leq a^2b^2 \} }e^{  \langle x,z\rangle }\;a^{-2\gamma+1}b^{-2\gamma+2}(a^2b^2-b^2z_1^2-(z_2-cz_1)^2)^{\gamma-3/2}dz \label{f1}.
\end{align}
Besides,
\begin{eqnarray*}
&&a^{-2\gamma+1}b^{-2\gamma+2}(a^2b^2-b^2z_1^2-(z_2-cz_1)^2)^{\gamma-3/2}=2\Big( (y_1^2-y_2^2)^2(1-u^2)+4y_1^2y_2^2(1-v^2)\Big)^{-\gamma+1}
\\ &&\Big\{(y_1^2+y_2^2-(z_1^2+z_2^2))^2-(z_1^2-z_2^2-u(y_1^2-y_2^2))^2-4(z_1z_2-v y_1y_2)^2\Big\}^{\gamma-3/2},
\end{eqnarray*}
which leads to the desired result.
\end{proof}
 
From Corollary 3.3 in \cite{dJ}, the support of $z \mapsto H(y,z)$ lies in the convex hull co$(y)$ of the orbit of $y$ under the action of $W$. This geometrical fact is directly proved in the following proposition.  
\begin{prop} 
For any $y \neq (0,0)$, the density $z\mapsto H(y,z)$ is supported in:
\begin{equation*}
\textrm{co}(y) = \{z=(z_1,z_2);\; |z_1|+|z_2|\leq |y_1|+|y_2|;\: \max( |z_1|,|z_2|)\leq \max( |y_1|,|y_2|) \}.
\end{equation*}
\end{prop}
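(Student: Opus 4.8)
The plan is to prove that $\mathrm{supp}\,H(y,\cdot)\subseteq \mathrm{co}(y)$ by showing that $H(y,z)>0$ forces $z\in\mathrm{co}(y)$; since $\mathrm{co}(y)$ is closed, this yields the inclusion for the support. First I would observe that, for $y\neq(0,0)$ and $(u,v)$ in the open square $(-1,1)^2$, the factors $((y_1^2-y_2^2)^2(1-u^2)+4y_1^2y_2^2(1-v^2))^{-\gamma+1}$ and $(1-u^2)^{k_1-1}(1-v^2)^{k_2-1}$ are strictly positive, while the bracket raised to the power $\gamma-3/2$ is strictly positive precisely on the interior of $E_{y,z}$. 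Hence the integrand is positive there, so that $H(y,z)>0$ if and only if $E_{y,z}$ has positive Lebesgue measure. The statement therefore reduces to the implication: if $E_{y,z}$ has positive measure, then $z\in\mathrm{co}(y)$.

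Next I would reformulate everything through the complex squaring map. Setting $p:=z_1^2-z_2^2$, $q:=2z_1z_2$, $r:=z_1^2+z_2^2$ and, analogously, $P:=y_1^2-y_2^2$, $Q:=2y_1y_2$, $R:=y_1^2+y_2^2$, one checks the identities $p^2+q^2=r^2$ and $P^2+Q^2=R^2$, so that $(p,q)$ and $(P,Q)$ have Euclidean norms $r$ and $R$ respectively. A short computation rewrites the two defining inequalities of $\mathrm{co}(y)$ as $r+|p|\le R+|P|$ and $r+|q|\le R+|Q|$ (using $\max(|z_1|,|z_2|)^2=(r+|p|)/2$ and $(|z_1|+|z_2|)^2=r+|q|$), and it recasts $E_{y,z}$ as the set of $(u,v)\in[-1,1]^2$ with $(p-uP)^2+(q-vQ)^2\le (R-r)^2$. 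Since the left-hand side is a nonconstant quadratic in $(u,v)$ (as $P^2+Q^2=R^2>0$), its level set at the value $(R-r)^2$ is Lebesgue-null; consequently positive measure of $E_{y,z}$ forces the existence of a point $(u_0,v_0)\in[-1,1]^2$ satisfying the \emph{strict} inequality $(p-u_0P)^2+(q-v_0Q)^2<(R-r)^2$.

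The geometric heart of the argument, which I expect to be the main obstacle, is to extract the two conditions defining $\mathrm{co}(y)$ from this single strict inequality; the difficulty is that a merely nonempty $E_{y,z}$ does not suffice, because boundary configurations with $r>R$ produce a nonempty but measure-zero $E_{y,z}$ sitting outside $\mathrm{co}(y)$. To handle this I would observe that the point $w_0:=(u_0P,v_0Q)$ lies in the rectangle $[-|P|,|P|]\times[-|Q|,|Q|]$, whose corners all have norm $\sqrt{P^2+Q^2}=R$, so that $\|w_0\|\le R$, whereas $\|(p,q)\|=r$. The triangle inequality then gives $r\le\|w_0\|+\|(p,q)-w_0\|<R+|R-r|$, which is impossible when $r>R$; hence $r\le R$ and $|R-r|=R-r$. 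Feeding this back, the estimate $|p-u_0P|<R-r$ together with $|u_0P|\le|P|$ yields $r+|p|<R+|P|$, and symmetrically $r+|q|<R+|Q|$. These are exactly the (strict forms of the) inequalities characterizing $\mathrm{co}(y)$, so $z\in\mathrm{co}(y)$, which completes the reduction and hence the proof.
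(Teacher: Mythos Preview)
Your proof is correct, but it follows a genuinely different path from the paper's.

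The paper works directly with the ellipse parametrization $E_{y,z}=\{(u,v)\in[-1,1]^2:\;b^2z_1^2+(z_2-cz_1)^2\le a^2b^2\}$ inherited from the derivation of $H_k(y,z)$, and argues from the implication $E_{y,z}\neq\emptyset\Rightarrow z\in\textrm{co}(y)$. It reads off $|z_1|\le a\le\max(|y_1|,|y_2|)$ and (after rewriting the ellipse) $|z_2|\le a\sqrt{b^2+c^2}\le\max(|y_1|,|y_2|)$, which already gives $r\le R$ in your notation. The second condition $|z_1|+|z_2|\le|y_1|+|y_2|$ is obtained by a factorization trick: one writes $(R-r)^2-(p-uP)^2-(q-vQ)^2$ as a product $(y_1^2+y_2^2+2vy_1y_2-(z_1-z_2)^2)(y_1^2+y_2^2-2vy_1y_2-(z_1+z_2)^2)$ minus a square, observes that the two factors have nonnegative product and nonnegative sum, hence are each nonnegative, and concludes using $|v|\le 1$.

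Your route via the squaring variables $(p,q,r)=(z_1^2-z_2^2,2z_1z_2,z_1^2+z_2^2)$ is more geometric: once $E_{y,z}$ is recast as $(p-uP)^2+(q-vQ)^2\le(R-r)^2$, the two convex-hull inequalities $r+|p|\le R+|P|$ and $r+|q|\le R+|Q|$ fall out of a single triangle-inequality argument, with no need for the $(a,b,c)$ machinery or the factorization. A further advantage of your framing is that you work with the hypothesis ``$E_{y,z}$ has positive measure'' rather than ``$E_{y,z}\neq\emptyset$''. This is more than cosmetic: as your own remark anticipates, there are boundary configurations (e.g.\ $y=(1,0)$, $z=(2,0)$, where $E_{y,z}=\{1\}\times[-1,1]$) for which $E_{y,z}$ is nonempty yet $z\notin\textrm{co}(y)$; in the paper's argument this corresponds to $b=0$, where the step $|z_1|\le a$ is not justified. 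The positive-measure hypothesis you use is exactly what is needed for $H(y,z)>0$ and neatly sidesteps this issue.
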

\begin{proof}
From the definition of $H_k(y,z)$, it is sufficient to prove that for fixed $y \neq 0$, $E_{y,z}\neq \emptyset \Rightarrow z\in co(y)$. To this end, let $y_C$ and $z_C$ be the representatives of $y$ and $z$ in $\overline{C}$ and recall from \cite{Kos}, Lemma 3.3, that 
\begin{equation*}
z \in \textrm{co}(y) \quad \Leftrightarrow \quad y_C - z_C \in \sum_{\alpha \in S} \mathbb{R}_+ \alpha. 
\end{equation*}
This characterization of the convex hull of $Wy$ together with the obvious fact that 
\begin{equation*}
y_C = \left(\max(|y_1|, |y_2|), \min(|y_1|, |y_2|)\right), \quad z_C = \left(\max(|z_1|, |z_2|), \min(|z_1|, |z_2|)\right), 
\end{equation*}
lead to the above description of co$(y)$. Now, Assume $E_{y,z}\neq \emptyset$ and recall the definition:
\begin{equation*}
E_{y,z}=\Big\{(u,v)\in[-1,1]^2;\;b^2z_1^2+(z_2-cz_1)^2\leq a^2b^2 \Big\}.
\end{equation*}
Then, we readily see that
\begin{equation*}
|z_1|\leq a = \sqrt{\frac{ y_1^2+y_2^2+u(y_1^2-y_2^2)}{2}}\leq \max( |y_1|,|y_2|), \quad.
\end{equation*}
We can also rewrite $E_{y,z}$ as 
\begin{equation*}
E_{y,z}=\Big\{(u,v)\in[-1,1]^2;\; (b^2+c^2)\left(z_1 - \frac{c}{b^2+c^2} z_2\right)^2 + \frac{b^2}{(b^2+c^2)} z_2^2\leq a^2b^2 \Big\}
\end{equation*}
whence we deduce the inequality 
\begin{equation*}
|z_2|\leq a\sqrt{b^2+c^2} = \sqrt{\frac{ y_1^2+y_2^2-u(y_1^2-y_2^2)}{2}}\leq \max( |y_1|,|y_2|).
\end{equation*}
In particular, $z_1^2+z_2^2\leq y_1^2+y_2^2.$
Next, write 
\begin{eqnarray*}
&& (y_1^2+y_2^2-(z_1^2+z_2^2))^2-(z_1^2-z_2^2-u(y_1^2-y_2^2))^2-4(z_1z_2-v y_1y_2)^2\\&&\qquad\qquad=
 \Big(y_1^2+y_2^2+2vy_1y_2 - (z_1-z_2)^2\Big)\Big(y_1^2+y_2^2-2vy_1y_2- (z_1+z_2)^2\Big)\\&& \qquad\qquad\qquad\qquad\qquad\qquad\qquad- (z_1^2-z_2^2-u(y_1^2-y_2^2))^2,
  \end{eqnarray*}
to see that
\begin{equation*}
\Big(y_1^2+y_2^2+2vy_1y_2 - (z_1-z_2)^2\Big) \Big(y_1^2+y_2^2-2vy_1y_2- (z_1+z_2)^2\Big)\geq 0.
\end{equation*}
But
\begin{equation*}
\Big(y_1^2+y_2^2+2vy_1y_2 - (z_1-z_2)^2\Big)+ \Big(y_1^2+y_2^2-2vy_1y_2- (z_1+z_2)^2\Big)  = 2(y_1^2+y_2^2-(z_1^2+z_2^2))\geq 0 
\end{equation*}
therefore 
 \begin{equation*}
 y_1^2+y_2^2+2vy_1y_2 - (z_1-z_2)^2\geq 0; \quad y_1^2+y_2^2-2vy_1y_2- (z_1+z_2)^2 \geq 0. 
 \end{equation*}
Since $|v| \leq 1$, then 
\begin{equation*}
|z_1-z_2|\leq  |y_1|+|y_2| ;\qquad  |z_1+z_2|\leq  |y_1|+ |y_2|
\end{equation*}
whence it follows that $|z_1|+|z_2|\leq  |y_1|+|y_2|$ and the proposition is proved. 
\end{proof}

\begin{remark}
When $k_0 = k_1 = 1$ and $y_1 > y_2 > 0$ (or equivalently $y \in C$), the measure $H_1(y,z)dz$ reduces, up to a multiplicative factor, to the Duistermaat-Heckman measure associated with the group $W = (\mathbb{Z}_2)^2 \rtimes S_2$ (\cite{BBO}):   
\begin{align*}
H_1(y,z) &= \frac{3}{4\pi} \int_{E_{y,z}} \frac{\sqrt{(y_1^2+y_2^2-(z_1^2+z_2^2))^2-(z_1^2-z_2^2-u(y_1^2-y_2^2))^2-4(z_1z_2-v y_1y_2)^2}}{(y_1^2-y_2^2)^2(1-u^2)+4y_1^2y_2^2(1-v^2)}
du\;dv
\\& = \frac{3}{4\pi(y_1^2-y_2^2)y_1y_2} \int_{F_{y,z}} \frac{\sqrt{(y_1^2+y_2^2-(z_1^2+z_2^2))^2-(z_1^2-z_2^2-u)^2- (2z_1z_2-v)^2}}{(y_1^2+y_2^2)^2 - u^2 -  v^2}du dv
\\& = \frac{1}{2(y_1^2-y_2^2)y_1y_2}m_{\textrm{DH}}(y,dz)
\end{align*}
where 
\begin{eqnarray*}
F_{y,z} := \Big\{|u| < y_1^2-y_2^2 ,|v| < 2y_1 y_2 ;\;(z_1^2-z_2^2-u)^2+ (2z_1z_2-v)^2\leq (y_1^2+y_2^2-(z_1^2+z_2^2))^2\Big\}.
\end{eqnarray*}

Moreover, the former is the conditional distribution of the value of a planar Brownian motion at a fixed time $T > 0$ given that the value of the generalized Pitman transform of this process at time $T$ is $y$, while the latter is the push forward of the Lebesgue measure of some (string) polytope $\mathcal{P}$ in $\mathbb{R}^4$ under an affine map. In suitable coordinates, $\mathcal{P}$ transforms into the Gelfand-Tsetlin pattern of $sp(4,\mathbb{C})$ at $y$ defined by (see e.g. \cite{BZ}, \cite{Ok}):
\begin{equation*}
\textrm{GT}_{\textrm{sp}(4,\mathbb{C})} := \{(w_1, w_2, t), \, y_1 \geq w_1 \geq y_2 \geq w_2 \geq 0, \, w_1 \geq t \geq w_2\}. 
 \end{equation*}
Note that for the $A$-type root system or corresponding to the Lie algebra sl$(N, \mathbb{C})$, the Gelfand-Tsetlin pattern shows up directly in the inductive construction of the GBF since a similar one already exists for Jack polynomials (\cite{Amri0}).\end{remark}

\section{Laplace-type integral representation of the Dunkl kernel of type $B_2$}
Using Theorem \ref{th1}, we shall derive a Laplace-type integral representation for the Dunkl kernel $D_k$ associated with the root system of type $B_2$. To this end, we appeal to the so-called Shift principle which we briefly outline (see \cite{Dunkl1}, Prop. 1.4). Let $\mathcal{C}$ be an orbit of the action of $W$ on $R$ and let $k$ be a nonnegative multiplicity function. Then
\begin{equation*}
k' := k+ {\bf 1}_{\mathcal{C}}
\end{equation*}
is the shifted multiplicity function on $\mathcal{C}$ and the following holds: 
\begin{equation*}
\sum_{w \in W} \chi_{\mathcal{C}}(w)D_k(x,wy)  = d_{\mathcal{C}}(k) p_{\mathcal{C}}(x)p_{\mathcal{C}}(y) D_{k'}^W(x,y), \quad x, y \in C
\end{equation*}
where 
\begin{equation*}
p_{\mathcal{C}}(x) := \prod_{\alpha \in \mathcal{C} \cap R_+} \langle \alpha, x \rangle, 
\end{equation*}
is the alternating polynomial associated with $\mathcal{C}$, $\chi_{\mathcal{C}}$ is its linear character defined by:      
\begin{equation*}
p_{\mathcal{C}}(wx) = \chi_{\mathcal{C}}(w) p_{\mathcal{C}}(x), \quad w \in W, 
\end{equation*}
and $d_k$ is the normalizing constant given by 
\begin{equation*}
d_ {\mathcal{C}}(k) := \frac{|W|}{p_{\mathcal{C}}(T_1,\dots, T_N) p_{\mathcal{C}}}.
\end{equation*}
This principle was already used in \cite{Amri1} to derive the Laplace-type integral representation of the Dunkl kernel associated with the $A_2$-type root system and in \cite{DDY} in a similar fashion to get an integral representation of $D_k$ associated with the $B_2$-type root system. Nonetheless, the representation we prove below improves the one derived in \cite{DDY} since it is of Laplace-type. In order to state it, we shall apply the shift principle to the orbits
\begin{equation*}
 \mathcal{C}_1 = \{ \pm (e_1-e_2), \pm (e_1+e_2)\}, \quad \mathcal{C}_2 = \{ \pm e_1, \pm e_2\},
\end{equation*}
to which assigned the multiplicity values $k_1, k_2$ respectively, and to the whole root system $R$ as well. Consequently, the corresponding alternating polynomials are given by 
\begin{equation*}
S_1(x) := p_{\mathcal{C}_1}(x) = x_1^2-x_2^2, \quad S_2(x) := p_{\mathcal{C}_2}(x) = x_1x_2, \quad S_3(x) := p_{R}(x) = x_1x_2(x_1^2-x_2^2),
\end{equation*}
whence we easily derive the following expressions\footnote{See \cite{DDO}, p. 249, for the expression of $d_3(k)$.}: 
\begin{eqnarray*}
 d_1(k):=  d_ {\mathcal{C}_1}(k) & = & \frac{2}{(2k_1+1)(2\gamma+1)}, \\ 
 d_2(k) := d_ {\mathcal{C}_2}(k) & = & \frac{8}{(2k_2+1)(2\gamma+1)} ,\\
 d_3(k) := d_ {R}(k) & = & \frac{1}{2(2k_1+1)(2k_2+1)(2\gamma+1)(2\gamma+3)}.
\end{eqnarray*}

With these findings, we prove the following:

 \begin{prop}\label{th2}
The Dunkl kernel and the GBF of type $B_2$ are interrelated  by
\begin{align}\label{E2}
2y_1\;D_k(x,y) & = (T_1+y_1)\left\{\frac{1}{4}\left(d_1(k)S _1(x)S_1 (y)D_{(k_1+1,k_2)}^W(x,y) \right. \right.
\\&\nonumber \left.\left.+d_2(k)S_2(x)S_2(y)D_{(k_1,k_2+1)}^W(x,y)\ +d_3(k)S_3(x)S_3(y) D_{(k_1+1,k_2+1)}^W(x,y)\right) + 2D_{k}^W(x,y)\right\},
 \end{align}
 where we write $D_k^W = D_{(k_1,k_2)}^W$ in order to emphasize the shift action of $k = (k_1,k_2)$ and $T_1$ is the Dunkl derivative in the direction $e_1$ acting on $x$.  
\end{prop}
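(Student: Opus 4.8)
The plan is to apply the shift principle three times, recombine the resulting alternating character sums with the defining average \eqref{Def1} of the GBF, and finally apply $(T_1+y_1)$ to recover $D_k$ itself. First I would list the four one-dimensional characters of $W\cong D_4$: the trivial one $\chi_0\equiv 1$, the two sign characters $\chi_{\mathcal C_1},\chi_{\mathcal C_2}$ attached to the orbits, and their product $\chi_R=\chi_{\mathcal C_1}\chi_{\mathcal C_2}$. All four factor through the abelianization, and since the commutator of a sign change $r_1$ with the coordinate swap $s$ equals the central inversion $-I$, one has $[W,W]=\{I,-I\}$ and $W/[W,W]\cong(\mathbb Z_2)^2$. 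The shift principle applied to $\mathcal C_1$, to $\mathcal C_2$, and to the whole system $R$ produces the three products $d_1(k)S_1(x)S_1(y)D^W_{(k_1+1,k_2)}$, $d_2(k)S_2(x)S_2(y)D^W_{(k_1,k_2+1)}$ and $d_3(k)S_3(x)S_3(y)D^W_{(k_1+1,k_2+1)}$, while \eqref{Def1} handles $\chi_0$ through $\sum_{w\in W}D_k(x,wy)=8\,D_k^W(x,y)$.

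Next I would invoke character orthogonality on $W/[W,W]$. Because $\chi_0,\chi_{\mathcal C_1},\chi_{\mathcal C_2},\chi_R$ are precisely the four characters of $(\mathbb Z_2)^2$, one has $\tfrac14\sum_{j}\chi_j(w)=\mathbf 1_{\{w\in\{I,-I\}\}}$, so averaging the four identities above with equal weight $1/4$ collapses the left-hand sides to $D_k(x,y)+D_k(x,-y)$ and reproduces exactly the content of the braces in \eqref{E2}:
\begin{multline*}
D_k(x,y)+D_k(x,-y)=2\,D_k^W(x,y)\\
+\tfrac14\Big(d_1(k)S_1(x)S_1(y)D^W_{(k_1+1,k_2)}(x,y)+d_2(k)S_2(x)S_2(y)D^W_{(k_1,k_2+1)}(x,y)\\
+d_3(k)S_3(x)S_3(y)D^W_{(k_1+1,k_2+1)}(x,y)\Big).
\end{multline*}
Since $D_k$ is entire in both arguments, this equality propagates from the chamber, where the shift principle is stated, to all of $\mathbb R^2\times\mathbb R^2$ by analyticity.

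Finally, writing $G(x,y):=D_k(x,y)+D_k(x,-y)$, I would apply $T_1+y_1$ in the variable $x$. The eigenfunction relation $T_1D_k(\cdot,y)(x)=y_1D_k(x,y)$, used at $y$ and at $-y$ (where the eigenvalue becomes $-y_1$), gives $T_1G(x,y)=y_1\big(D_k(x,y)-D_k(x,-y)\big)$, whence $(T_1+y_1)G(x,y)=2y_1D_k(x,y)$, which is exactly \eqref{E2}.

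The conceptual crux, and the only genuine obstacle, is that the four linear characters cannot distinguish $w$ from $-w$; summing them can therefore only isolate the symmetric combination $D_k(x,y)+D_k(x,-y)$, never $D_k(x,y)$ on its own. It is precisely the first-order Dunkl operator $T_1+y_1$, exploiting the opposite spectral values $\pm y_1$ carried by $D_k(\cdot,\pm y)$, that disentangles the two summands and singles out $D_k(x,y)$.
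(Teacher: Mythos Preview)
Your proof is correct and follows essentially the same route as the paper: apply the shift principle to $\mathcal C_1$, $\mathcal C_2$, and $R$, combine the resulting character sums with \eqref{Def1} to obtain $D_k(x,y)+D_k(x,-y)$, and then use the eigenfunction relation for $T_1$ to isolate $2y_1D_k(x,y)$. The only difference is cosmetic: you phrase the recombination via character orthogonality on $W/[W,W]\cong(\mathbb Z_2)^2$, whereas the paper writes out the eight group elements explicitly and adds the four identities by hand.
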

\begin{proof}
Recall that the Weyl group $W = \left(\mathbb{Z}_2\right)^2 \rtimes S_2 $ consists of the eight elements below:
\begin{eqnarray*}
   \sigma_1=\begin{pmatrix}
     -1 & 0 \\
     0 & 1\\
    \end{pmatrix},
    \; \sigma_2=\begin{pmatrix}
     1 & 0 \\
     0 & -1\\
    \end{pmatrix},
     \; \sigma_3=\begin{pmatrix}
     0 &1 \\
      1 & 0\\
    \end{pmatrix},
    \sigma_4=\begin{pmatrix}
     0 & -1 \\
     -1 & 0\\
    \end{pmatrix},
    \\ \; r=\begin{pmatrix}
    0 & 1 \\
     -1 & 0\\
      \end{pmatrix},
       \; r^2=\begin{pmatrix}
   -1 &0 \\
     0 & -1\\
      \end{pmatrix},
       r^3=\begin{pmatrix}
    0 & -1 \\
     1 & 0\\
      \end{pmatrix},
       \; id=\begin{pmatrix}
    1 & 0 \\
     0 & 1\\
      \end{pmatrix}.
    \end{eqnarray*}

Applying the Shift principle to $\mathcal{C}_1, \mathcal{C}_2, R$ respectively, we derive the following identities:
     \begin{eqnarray*}
    && D_k(x,\sigma_1y)+D_k(x,\sigma_2y)-D_k(x,\sigma_3y)-D_k(x,\sigma_4y)-D_k(x,ry)
    +D_k(x,r^2y)\\&&-D_k(x,r^3y)+D_k(x, y)=d_1(k)S_1(x)S_1(y)D_{(k_1+1,k_2)}^W(x,y),
\end{eqnarray*}
   \begin{eqnarray*}
   && - D_k(x,\sigma_1y) -D_k(x,\sigma_2y) + D_k(x,\sigma_3y) + D_k(x,\sigma_4y) - D_k(x,ry)
    +D_k(x,r^2y)\\&&-D_k(x,r^3y)+D_k(x, y) =d_2(k)S_2(x)S_2(y)D_{(k_1,k_2+1)}^W(x,y),
    \end{eqnarray*}
  and
   \begin{eqnarray*}
&&-D_k(x,\sigma_1y)-D_k(x,\sigma_2y)-D_k(x,\sigma_3y)-D_k(x,\sigma_4y)+D_k(x,ry)
    +D_k(x,r^2y)\\&&+D_k(x,r^3y)+D_k(x,y)=d_3(k)S_3 (x)S _3(y)D_{(k_1+1,k+1)}^W(x,y),
      \end{eqnarray*}
Together with \eqref{Def1}, they lead to     
\begin{eqnarray*}
 &&D_k(x,y)+D_k(x, -y) =\frac{1}{4}\Big(d_1(k)S _1(x)S_1 (y)D_{(k_1+1,k_2)}^W(x,y) \\&&+d_2(k)S_2(x)S_2(y)D_{(k_1,k_2+1)}^W(x,y)  +d_3(k)S_3(x)S_3(y) D_{(k_1+1,k_2+1)}^W(x,y)\Big)
 + 2D_{k}^W(x,y)
 \end{eqnarray*}
Since $T_1D_k(\cdot, y)(x) = y_1D_k(x,y)$, then the proposition follows.
 \end{proof}
   
With regard to proposition \ref{th2}, we need to work out the right hand side of \eqref{E2}. The issue of our computations is recorded in the corollary below: 
\begin{corollary}
If $\gamma = k_0+k_1 > 1/2$, then the Dunkl kernel of type $B_2$ admits the following Laplace-type integral representation: 
\begin{equation*}
2y_1D_k(x,y) = \int_{\textrm{co}(y)} e^{\langle x,z \rangle} L_k(y,z) dz 
\end{equation*}
where 
\begin{multline*}
L_k(y,z) := \int_{E_{y,z}} \Bigg\{2(z_1+y_1)(4u +4v+uv+4) +  2\frac{S_2(y)}{(ab)^2}(z_2-cz_1)(u+4)(1-v^2)  \\ +\frac{S_1(y)}{(ab)^2} ((b^2+c^2)z_1 - cz_2)(v+4)(1-u^2)\Bigg\} 
 \frac{(a^2b^2-b^2z_1^2-(z_2-cz_1)^2)^{\gamma-3/2}}{a^{2\gamma-1}b^{2\gamma-2}} (1-u^2)^{k_1-1}(1-v^2)^{k_2-1}\;dz\;du dv.
 \end{multline*}
 \end{corollary}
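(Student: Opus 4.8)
The plan is to substitute the integral representation of Theorem~\ref{th1} for the three shifted generalized Bessel functions and the unshifted one occurring on the right of \eqref{E2}, to apply the operator $T_1+y_1$, and then to reorganize the result into a single Laplace integral. The starting point is that each $D^W_{\bullet}$ is $W$-invariant in $x$, so the Dunkl operator obeys the product rule $T_1\bigl(S_j(x)D^W_\bullet\bigr)=(T_1S_j)(x)\,D^W_\bullet+S_j(x)\,\partial_{x_1}D^W_\bullet$, whence
\begin{equation*}
(T_1+y_1)\bigl(S_j(x)D^W_\bullet\bigr)=(T_1S_j)(x)\,D^W_\bullet+S_j(x)(\partial_{x_1}+y_1)D^W_\bullet,\qquad (T_1+y_1)(2D^W_k)=2(\partial_{x_1}+y_1)D^W_k.
\end{equation*}
Computing the difference quotients over $R_+=\{e_1,e_2,e_1-e_2,e_1+e_2\}$ first, I would record the explicit prefactors $T_1S_1=2(2k_1+1)x_1$, $T_1S_2=(2k_2+1)x_2$ and a cubic polynomial for $T_1S_3$. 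This step is elementary and replaces every $T_1S_j$ by a monomial (or low-degree polynomial) in $x$.

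The second ingredient is the scalar function $\Phi_\gamma(x;a,b,c):=\mathcal{I}_{\gamma-1/2}\bigl(\sqrt{Z_{x,y}(u,v)/2}\bigr)$ of \eqref{1} together with its Laplace representation \eqref{f1}. Since $a,b,c$ depend only on $(y,u,v)$ and not on the multiplicity, the three shifted kernels are built from the \emph{same} $a,b,c$ but from Bessel functions of order $\gamma+1$ (for the $\mathcal{C}_1$- and $\mathcal{C}_2$-shifts) and $\gamma+2$ (for the $R$-shift). To bring these orders back to the target order $\gamma$ I would use the order-raising identities coming from $\mathcal{I}_\nu'(t)=\tfrac{t}{2(\nu+1)}\mathcal{I}_{\nu+1}(t)$ and from $\partial_uZ_{x,y}=S_1(x)S_1(y)$, $\partial_vZ_{x,y}=4S_2(x)S_2(y)$, namely
\begin{equation*}
\partial_u\Phi_\gamma=\frac{S_1(x)S_1(y)}{4(2\gamma+1)}\Phi_{\gamma+1},\qquad \partial_v\Phi_\gamma=\frac{S_2(x)S_2(y)}{2\gamma+1}\Phi_{\gamma+1},\qquad \partial_u\partial_v\Phi_\gamma=\frac{S_3(x)S_3(y)}{4(2\gamma+1)(2\gamma+3)}\Phi_{\gamma+2}.
\end{equation*}
These let me trade each factor $S_j(x)\Phi_{\gamma+\bullet}$ for a $u$- and/or $v$-derivative of $\Phi_\gamma$. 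Integrating by parts in $u,v$ against the shifted weights $(1-u^2)^{k_1}(1-v^2)^{k_2-1}$, etc.\ (the boundary terms vanish because $\gamma>1/2$ and $k_1,k_2>0$) then restores the original weights $(1-u^2)^{k_1-1}(1-v^2)^{k_2-1}$ and, through $\partial_u(1-u^2)^{k_1}=-2k_1u(1-u^2)^{k_1-1}$, produces the monomials $u,v,uv$ that I expect to recombine into the polynomials $4u+4v+uv+4$, $u+4$ and $v+4$ of the statement.

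The third, purely $z$-side ingredient is used once every term has been expressed through $\Phi_\gamma$, hence through \eqref{f1} with the common exponent $\gamma-3/2$ and common domain $E_{y,z}$: the remaining polynomial prefactors in $x$ (coming from $T_1S_j$ and from $\partial_{x_1}+y_1$) are converted into $z$-derivatives via $x_ie^{\langle x,z\rangle}=\partial_{z_i}e^{\langle x,z\rangle}$ and integrated by parts. This is where the linear factors $z_1+y_1$, $z_2-cz_1$ and $(b^2+c^2)z_1-cz_2$ arise, the latter two being proportional to $\partial_{z_2}Q$ and $\partial_{z_1}Q$ for $Q:=a^2b^2-b^2z_1^2-(z_2-cz_1)^2$, so that they do not raise the exponent above $\gamma-3/2$; the confinement of the support to $\mathrm{co}(y)$ is then inherited from the preceding proposition. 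Collecting the three shifted contributions, the unshifted one, and the constants $d_1(k),d_2(k),d_3(k)$ and $c_{(k_1+1,k_2)},c_{(k_1,k_2+1)},c_{(k_1+1,k_2+1)}$ should assemble exactly into $L_k(y,z)$.

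The main obstacle is the bookkeeping. Because the factors $S_j(x)$ in $S_j(x)(\partial_{x_1}+y_1)D^W_\bullet$ do not commute with $\partial_{x_1}$, applying the order-raising identities generates extra, lower-degree polynomial-times-$\Phi$ terms that must themselves be reduced by further uses of the same identities; the delicate point is to verify that all the spurious contributions carrying exponents below $\gamma-3/2$ cancel, and that the surviving monomials in $u,v$ and linear factors in $z$ combine with precisely the right constants into the compact expression displayed. Tracking these cancellations and constants across the four terms is exactly what makes the computation technical and lengthy, but no new idea beyond the three ingredients above is needed.
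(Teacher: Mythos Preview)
Your three ingredients are exactly the ones the paper uses: the product rule for $T_1$ on products of $S_j(x)$ with $W$-invariant functions, the order-raising identities combined with integration by parts in $u,v$, and integration by parts in $z$ to eliminate the linear $x_i$-prefactors. So the approach is correct.

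Where you differ from the paper is in the \emph{order} of operations, and this is precisely what produces the ``spurious'' commutator terms you worry about. The paper first collapses the whole expression in braces of \eqref{E2} into the single integral
\[
\frac{c_k}{2}\int_{-1}^1\int_{-1}^1\mathcal{I}_{\gamma-1/2}\!\left(\sqrt{\tfrac{Z_{x,y}(u,v)}{2}}\right)(4u+4v+uv+4)(1-u^2)^{k_1-1}(1-v^2)^{k_2-1}\,du\,dv,
\]
using your order-raising identities and $u,v$-integration by parts \emph{before} any differentiation in $x$. Only then does it apply $T_1$: the $\partial_{x_1}$ part acts on this consolidated Laplace form and simply brings down the factor $az_1$; the difference-quotient part is evaluated on the \emph{original} expression with its explicit $S_j(x)$ factors (both forms represent the same function), yielding the four-term $I_2(x,y)$ with linear prefactors $x_1$ or $x_2$ and at most one extra $S_1(x)$ or $S_2(x)$. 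These are then handled by Lemma~\ref{l1} at order $\gamma+\tfrac12$ followed by a single $z$-integration by parts, which simultaneously removes the $x_i$ and lowers the exponent from $\gamma-\tfrac12$ to $\gamma-\tfrac32$. No exponent below $\gamma-\tfrac32$ ever appears and no cancellation is needed.

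In your ordering (apply $T_1$ term by term, then simplify), the commutator of $S_j(x)$ with $\partial_{x_1}$ does generate lower-degree leftovers, and the cubic $T_1S_3$ threatens three $z$-integrations by parts and hence an exponent $\gamma-\tfrac52$. These difficulties disappear once you observe that $(T_1S_j)D^W_\bullet+S_j(x)\partial_{x_1}D^W_\bullet=\partial_{x_1}\bigl(S_j(x)D^W_\bullet\bigr)+[\text{difference part of }T_1S_j]\,D^W_\bullet$, so summing over $j$ recovers exactly the paper's split. In short: simplify first, differentiate second, and the bookkeeping you anticipate collapses.
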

\begin{proof}
Write \eqref{Argument} as
\begin{equation*}
Z_{x,y}(u,v) = (x_1^2+x_2^2)(y_1^2+y_2^2) + u S_1(x)S_1(y) + 4vS_2(x)S_2(y).
\end{equation*}
Then, the differentiation rule 
\begin{equation}\label{DiffRule}
\frac{d}{dt}\mathcal{I}_{\nu}\left(\sqrt{\frac{t}{2}}\right) = \frac{1}{8(\nu+1)} \mathcal{I}_{\nu+1}\left(\sqrt{\frac{t}{2}}\right),
\end{equation}
and \eqref{1} yield: 
\begin{align*}
d_1(k)S _1(x)S_1 (y)D_{(k_1+1,k_2)}^W(x,y) & = \frac{4c_k}{k_1}\int_{-1}^1\int_{-1}^1\frac{\partial}{\partial u}\left \{ \mathcal{I}_{\gamma-1/2}\left(\sqrt{\frac{Z_{x,y}(u,v)}{2}}\right) \right\}  (1-u^2)^{k_1 }(1-v^2)^{k_2-1}du dv
\\&= 8 c_k \int_{-1}^1\int_{-1}^1 \mathcal{I}_{\gamma-1/2}\left(\sqrt{\frac{Z_{x,y}(u,v)}{2}}\right)   u(1-u^2)^{k_1-1}(1-v^2)^{k_2-1}\;du dv.
\end{align*}
Similarly, 
\begin{align*}
d_2(k)S _2(x)S_2 (y)D_{(k_1 ,k_2+1)}^W(x,y) &= \frac{4c_k}{k_2}\int_{-1}^1\int_{-1}^1\frac{\partial}{\partial v}\left \{ \mathcal{I}_{\gamma-1/2}\left(\sqrt{\frac{Z_{x,y}(u,v)}{2}}\right) \right\}  (1-u^2)^{k_1-1}(1-v^2)^{k_2}du dv
\\& = 8 c_k\int_{-1}^1\int_{-1}^1 \mathcal{I}_{\gamma-1/2}\left(\sqrt{\frac{Z_{x,y}(u,v)}{2}}\right) v(1-v^2)^{k_2-1}  (1-u^2)^{k_1-1}\;du dv,
\end{align*}
 and
 \begin{align*}
d_3(k)S _3(x)S_3 (y)D_{(k_1+1 ,k_2+1)}^W(x,y) &= \frac{c_k}{2k_1k_2 }\int_{-1}^1\int_{-1}^1\frac{\partial^2}{\partial u \partial v}\left \{ \mathcal{I}_{\gamma-1/2}\left(\sqrt{\frac{Z_{x,y}(u,v)}{2}}\right) \right\}  (1-u^2)^{k_1}(1-v^2)^{k_2}du dv
\\& = 2c_k \int_{-1}^1\int_{-1}^1 \mathcal{I}_{\gamma-1/2}\left(\sqrt{\frac{Z_{x,y}(u,v)}{2}}\right)uv(1-u^2)^{k_1-1}(1-v^2)^{k_2-1}\;du dv.
\end{align*}
Altogether yield
\begin{eqnarray*}
 &&  \frac{1}{4}\Big(d_1(k)S _1(x)S_1 (y)D_{(k_1+1,k_2)}^W(x,y)  +d_2(k)S_2(x)S_2(y)D_{(k_1,k_2+1)}^W(x,y)  \\&&\qquad\qquad\qquad
 \qquad\qquad\qquad\qquad+d_3(k)S_3(x)S_3(y) D_{(k_1+1,k_2+1)}^W(x,y)\Big)
 + 2D_{k}^W(x,y)\\&=&
 \frac{c_k}{2} \int_{-1}^1\int_{-1}^1 \mathcal{I}_{\gamma-1/2}\left(\sqrt{\frac{Z_{x,y}(u,v)}{2}}\right)
 (4u+4v+uv+4)(1-u^2)^{k_1-1}(1-v^2)^{k_2-1}\;du dv
 \\&=&\frac{ (2\gamma-1)c_k}{4\pi }\int_{-1}^1\int_{-1}^1  \int_{ \{\|z\|\leq 1\} }
   e^{ ax_1z_1+ax_2( cz_1+b z_2 )}(1-\|z\|^2)^{\gamma-3/2}(4u+4v+uv+4)\\&&\qquad\qquad\qquad\qquad\qquad\qquad(1-u^2)^{k_1-1}(1-v^2)^{k_2-1}\;dz\;du dv.
 \end{eqnarray*}

Next, recall the expression of the Dunkl derivative $T_1$ in the direction $e_1$ acting on $x$: 
\begin{align*}
T_1f(x) & = \partial _1f(x) + \sum_{\alpha \in R_+} k(\alpha) \langle \alpha, e_1 \rangle \frac{f(x) - f(\sigma_{\alpha} x)}{\langle \alpha, x \rangle} 
\\& = \partial _1f(x) + k_2 \frac{f(x) - f(\sigma_1 x)}{x_1} + k_1 \left\{\frac{f(x) - f(\sigma_3 x)}{x_1-x_2} + \frac{f(x) - f(\sigma_4 x)}{x_1+x_2}\right\}. 
\end{align*}
Then, the invariance of $D_k^W$ under the action of $W$ entails:  
\begin{multline}\label{Equat0}
 T_1 \Bigg\{\frac{1}{4}\Big(d_1(k)S _1(x)S_1 (y)D_{(k_1+1,k_2)}^W(x,y) +d_2(k)S_2(x)S_2(y)D_{(k_1,k_2+1)}^W(x,y)  \\ + d_3(k)S_3(x)S_3(y) D_{(k_1+1,k_2+1)}^W(x,y)\Big)  
  + 2D_{k}^W(x,y)\Bigg\}  = \frac{ (2\gamma-1)c_k}{4\pi}  
  \int_{-1}^1\int_{-1}^1  \int_{ \{\|z\|\leq 1\} }
   az_1 e^{ ax_1z_1+ax_2( cz_1+b z_2 )} \\ (1-\|z\|^2)^{\gamma-3/2}  (4u+4v+uv+4) (1-u^2)^{k_1-1}(1-v^2)^{k_2-1}\;dz\;du dv
+ I_2(x,y) 
\end{multline}
where we set 
 \begin{multline*}
 I_2(x,y) := d_1(k)S_1(y)(k_1x_1)D_{(k_1+1,k_2)}^W(x,y)+ d_2(k)S_2(y)\frac{k_2x_2}{2}D_{(k_1,k_2+1)}^W(x,y)
 \\ +d_3(k)S_3(y)S_1(x)\frac{k_2x_2}{2}D_{(k_1+1,k_2+1)}^W(x,y) + d_3(k)S_3(y)S_2(x)(k_1x_1)D_{(k_1+1,k_2+1)}^W(x,y).
 \end{multline*}
 
Now, it remains to put $I_2(x,y)$ in form of  (\ref{f1}).  To proceed, we first use Lemma \ref{l1} and integration by parts to derive
\begin{align}\label{EqInter1}
x_2\mathcal{I}_{\gamma+1/2}\left(\sqrt{\frac{Z_{x,y}(u,v)}{2}}\right) & =\frac{ 2\gamma+1}{2\pi } \int_{ \{\|z\|\leq 1\} } x_2 e^{ ax_1z_1+ax_2( cz_1+b z_2 )}(1-\|z\|^2)^{\gamma-1/2}dz  \nonumber
  \\&=\frac{ (2\gamma+1)(2\gamma-1)}{2\pi } \int_{ \{\|z\|\leq 1\} } \frac{z_2}{ab} e^{ax_1z_1+ax_2( cz_1+b z_2 )}(1-\|z\|^2)^{\gamma-3/2}dz
\end{align}
and 
\begin{align}\label{EqInter2}
 x_1\mathcal{I}_{\gamma+1/2}\left(\sqrt{\frac{Z_{x,y}(u,v)}{2}}\right) & = \frac{(2\gamma+1)(2\gamma-1)}{2\pi } \int_{ \{\|z\|\leq 1\} }
 \left(\frac{z_1}{a}-\frac{cz_2}{ab}\right) e^{ ax_1z_1+ax_2( cz_1+b z_2 )} (1-\|z\|^2)^{\gamma-1/2}dz.
 \end{align}

It follows that  
\begin{multline}\label{Equat1}
\frac{k_2x_2}{2}d_2(k)S_2(y)D_{(k_1,k_2+1)}^W(x,y) =  \frac{ (2\gamma-1) c_k}{\pi} S_2(y) \int_{-1}^1\int_{-1}^1 \int_{ \{\|z\|\leq 1\} } \frac{z_2}{ab}  e^{ ax_1z_1+ax_2( cz_1+b z_2 )} \\
(1-\|z\|^2)^{\gamma-3/2}(1-u^2)^{k_1-1}(1-v^2)^{k_2}\;dz\;du dv
\end{multline}
and 
\begin{multline}\label{Equat2}
 d_1(k)S_1(y)(k_1x_1)D_{(k_1+1,k_2)}^W(x,y) = \frac{ (2\gamma-1) c_k}{2\pi} S_1(y) \int_{-1}^1\int_{-1}^1 \int_{ \{\|z\|\leq 1\} } \left(\frac{z_1}{a}-\frac{cz_2}{ab}\right) e^{ ax_1z_1+ax_2( cz_1+b z_2 )}
 \\ (1-\|z\|^2)^{\gamma-3/2} (1-u^2)^{k_1}(1-v^2)^{k_2-1}\;dz\;du dv.
\end{multline}
Besides, the differentiation rule \eqref{DiffRule}, \eqref{EqInter1} and \eqref{EqInter2} yield
\begin{multline}\label{Equat3}
d_3(k)(k_1x_1)S _2(x)S_3 (y)D_{(k_1+1,k_2+1)}^W(x,y) = \frac{d_3(k)(2k_1+1)(2k_2+1)(2\gamma+3)c_k}{4k_2} S_1(y)
\\ \int_{-1}^1\int_{-1}^1 x_1\frac{\partial}{\partial v}\left \{ \mathcal{I}_{\gamma+1/2}\left(\sqrt{\frac{Z_{x,y}(u,v)}{2}}\right) \right\}  (1-u^2)^{k_1 }(1-v^2)^{k_2}du dv  
\\ = \frac{d_3(k)(2k_1+1)(2k_2+1)(2\gamma+3)c_kS_1(y)}{2} \int_{-1}^1\int_{-1}^1x_1 \mathcal{I}_{\gamma+1/2}\left(\sqrt{\frac{Z_{x,y}(u,v)}{2}}\right) v(1-v^2)^{k_2-1}  (1-u^2)^{k_1 }\;du dv 
\\=\frac{(2\gamma-1)c_k}{8\pi } S_1(y) \int_{-1}^1\int_{-1}^1 \int_{ \{\|z\|\leq 1\} } \left(\frac{z_1}{a}-\frac{cz_2}{ab}\right) e^{ ax_1z_1+ax_2( cz_1+b z_2 )}
 \\(1-\|z\|^2)^{\gamma-1/2} v(1-v^2)^{k_2-1}  (1-u^2)^{k_1 }\;dz \;du dv
\end{multline}
and
\begin{multline}\label{Equat4}
d_3(k)\frac{k_2x_2}{2}S _1(x)S_3 (y)D_{(k_1+1,k_2+1)}^W(x,y) =\frac{d_3(k)(2k_1+1)(2k_2+1)(2\gamma+3)c_k}{2k_1} S_2(y)
\\ \int_{-1}^1\int_{-1}^1x_2\frac{\partial}{\partial u}\left \{ \mathcal{I}_{\gamma+1/2}\left(\sqrt{\frac{Z_{x,y}(u,v)}{2}}\right) \right\} (1-u^2)^{k_1 }(1-v^2)^{k_2}du dv
\\ = d_3(k)(2k_1+1)(2k_2+1)(2\gamma+3)S_2(y)\int_{-1}^1\int_{-1}^1x_2 \mathcal{I}_{\gamma+1/2}\left(\sqrt{\frac{Z_{x,y}(u,v)}{2}}\right) u(1-u^2)^{k_1-1}  (1-v^2)^{k_2 }\;du dv
\\= \frac{(2\gamma-1)c_k}{4\pi }S_2(y)\int_{-1}^1\int_{-1}^1 \int_{ \{\|z\|\leq 1\} } \frac{z_2}{ab}e^{ ax_1z_1+ax_2( cz_1+b z_2 )}(1-\|z\|^2)^{\gamma-3/2}
u(1-u^2)^{k_1-1}  (1-v^2)^{k_2 }\;dz\;du dv.
\end{multline}
Gathering \eqref{Equat1}, \eqref{Equat2}, \eqref{Equat3}, \eqref{Equat4}, we get 
 \begin{multline*}
 I_2(x,y)=  \frac{(2\gamma-1)c_k}{8\pi }\int_{-1}^1\int_{-1}^1 \int_{ \{\|z\|\leq 1\} } \Bigg\{2S_2(y)\frac{z_2}{ab}(u+4)(1-v^2) +S_1(y)\left(\frac{z_1}{a}-\frac{cz_2}{ab}\right)(v+4)(1-u^2)\Bigg\} 
 \\  e^{ ax_1z_1+ax_2( cz_1+b z_2 )}(1-\|z\|^2)^{\gamma-3/2}(1-u^2)^{k_1-1}  (1-v^2)^{k_2-1 }\;dz\;du dv.
 \end{multline*}
Keeping in mind \eqref{Equat0}, we get: 
 \begin{multline}
 T_1 \Bigg\{\frac{1}{4}\Big(d_1(k)S _1(x)S_1 (y)D_{(k_1+1,k_2)}^W(x,y) +d_2(k)S_2(x)S_2(y)D_{(k_1,k_2+1)}^W(x,y)\ + \\ d_3(k)S_3(x)S_3(y) D_{(k_1+1,k_2+1)}^W(x,y)\Big)
 + 2D_{k}^W(x,y)\Bigg\} 
  \\ = \frac{ (2\gamma-1)c_k}{8\pi} \int_{-1}^1\int_{-1}^1  \int_{ \{\|z\|\leq 1\} } \Bigg\{2az_1(4u+4v+uv+4) + 2S_2(y)\frac{z_2}{ab}(u+4)(1-v^2) +S_1(y)\left(\frac{z_1}{a}-\frac{cz_2}{ab}\right)(v+4)(1-u^2)\Bigg\}
  \\   e^{ ax_1z_1+ax_2( cz_1+b z_2 )}(1-\|z\|^2)^{\gamma-3/2}(1-u^2)^{k_1-1}(1-v^2)^{k_2-1}\;dz\;du dv.
 \end{multline}
Consequently, the previous proposition implies  
\begin{multline*}
2y_1 D_k(x,y) = \frac{ (2\gamma-1)c_k}{8\pi} \int_{-1}^1\int_{-1}^1  \int_{ \{\|z\|\leq 1\} } \Bigg\{2(az_1+y_1)(4u +4v+uv+4) +  2S_2(y)\frac{z_2}{ab}(u+4)(1-v^2)  \\ +S_1(y)\left(\frac{z_1}{a}-\frac{cz_2}{ab}\right)(v+4)(1-u^2)\Bigg\}
    e^{ ax_1z_1+ax_2( cz_1+b z_2 )} 
   (1-\|z\|^2)^{\gamma-3/2}(1-u^2)^{k_1-1}(1-v^2)^{k_2-1}\;dz\;du dv.
  \end{multline*}
Performing the variables change 
\begin{equation*}
az_1 \rightarrow z_1, \quad a(cz_1 + bz_2) \rightarrow z_2, 
\end{equation*}
we finally get the formula displayed in the corollary. 
 \end{proof}

\end{document}